\newcommand{\R}{\mathbb{R}}
\newcommand{\N}{\mathbb{N}}
\newcommand{\e}{\varepsilon}
\newcommand{\qqand}{\qquad\text{and}\qquad}
\newtheorem{theorem}{Theorem}[section]
\newtheorem{corollary}[theorem]{Corollary}
\newtheorem{lemma}[theorem]{Lemma}
\newtheorem{proposition}[theorem]{Proposition}
\theoremstyle{definition}
\theoremstyle{remark}
\newtheorem{remark}{Remark}[section]
\DeclareMathOperator{\supp}{supp}
\DeclareMathOperator{\pv}{P.V.}
\numberwithin{equation}{section}
\begin{document}

\title[Strongly singular operators]{Estimates for 
strongly singular operators along curves}

\author{Magali Folch-Gabayet}
\address{Instituto de Matem\'aticas\\
Universidad Nacional Aut\'onoma de M\'exico\\
Circuito Exterior, Cd. Universitaria\\Mexico City, Mexico 04510}
\email{folchgab@matem.unam.mx}

\author{Ricardo A. S\'aenz}
\address{Facultad de Ciencias\\
Universidad de Colima\\
Ave. Bernal D\'iaz del Castillo \# 340, Col. Villa San Sebasti\'an\\
Colima, Colima, Mexico 28045}
\email{rasaenz@ucol.mx}

\subjclass[2020]{42B20; 44A15}
\keywords{integrals along curves, oscillatory integrals, strongly
singular operators}

\begin{abstract}
For a proper function $f$ on the plane, we study the operator
\[
Tf(x,y) = \lim_{\e\to 0} \int_\e^1 f(x-t,y-t^k) \frac{e^{2\pi i \gamma(t)}}{\psi(t)} dt,
\]
where $k\ge1$ and $\psi$ and $\gamma$ are functions defined near the origin such
that $\psi(t)\to 0$ and $|\gamma(t)|\to\infty$ as $t\to 0$. We give sufficient
regularity and growth conditions on $\psi$ and $\gamma$ for its multiplier to be
a bounded function, and thus for the operator to be bounded on $L^2(\R^2)$. We consider an extension to $L^p(\R^2)$, for certain $p's$.
\end{abstract}

\maketitle

\section{Preliminaires}

In trying to generalize the Calder\'on-Zygmund method of rotations to kernels with
non standard homogeneity, you are led to the study of the Hilbert transform
along curves. This work was started by Fabes and Rivi{\`e}re in 
\cite{FabesRiviere}. See \cite{SteinBeijing} for a very nice explanation of the
difficulties on dealing with such operators.

It is known that
\begin{equation*}
H_\phi f(x) = \pv \int_{-1}^1 f(x - \phi(t)) \frac{dt}{t}
\end{equation*}
is bounded on $L^p$ for certain well-curved $\phi$ in $\R^d$ (see
\cite{NRW1,NRW2,SteinWainger,CNS99}). However, when the singularity is given by
a power greater than one, say $t|t|^\alpha$ with $\alpha>0$, Chandarana
\cite{Chandarana} proved that the operator is bounded as long as a sufficiently
rapid oscillatory term is involved. Indeed, Chandarana proved that the operator
\begin{equation}\label{Tab}
T_{\alpha,\beta}f(x,y) = \pv \int_{-1}^1 
f(x - \phi(t))  \frac{e^{2\pi i|t|^{-\beta}}}{t|t|^\alpha}dt,
\end{equation}
for the curve $\phi(t) = (t,t^k)$, $k\ge 3$, is bounded on $L^2(\R^2)$ if and
only if
\begin{equation*}
\beta \ge 3\alpha.
\end{equation*}
The author also proved that the operator $T_{\alpha,\beta}$ is bounded on 
$L^p(\R^2)$ as long as $\beta>3\alpha$ and 
\begin{equation*}
\Big| \frac{1}{p} - \frac{1}{2} \Big| < 
\frac{(\beta-3\alpha)(\beta+2)}{(\beta-3\alpha)(\beta+2) + 12\alpha(\beta+1)}.
\end{equation*}

In this work we discuss the corresponding hyper-singular operator in $\R^2$
along the polynomial curve $\phi(t) =  (t, t^k)$ with a singularity much worse
than a power, say of the order of $t\to e^{-1/|t|}$ or worse. The authors have
studied oscillatory integrals with such kernels previously in 
\cite{Folch99} and \cite{FolchSaenz19}.

We now consider, for $k\ge 1$, the operator
\begin{equation}\label{operator}
Tf(x,y) = \lim_{\e\to 0} \int_\e^1 f(x-t,y-t^k) \frac{e^{2\pi i \gamma(t)}}{\psi(t)} dt,
\end{equation}
where the functions $\gamma\in C^3((0,1])$ and $\psi\in C^2((0,1])$ satisfy
the following assumptions:
\begin{enumerate}
\item[(a.1)] The functions $\gamma, \psi$, and their derivatives 
$\gamma', \gamma'', \gamma'''$, and $\psi'$ are all monotone. We also
assume $\gamma''(t) > 0$ for $t\in(0,1]$, that $\gamma''$ is decreasing, 
$\lim_{t\to0}\gamma''(t) = \infty$ and that $\gamma''(1) = 1$.
\item[(a.2)] There exists $C>0$ such that, for $s\ge 1$, 
\begin{equation*}
|\gamma'''(\gamma''^{-1}(2s))| \le C |\gamma'''(\gamma''^{-1}(s))|.
\end{equation*}
\item[(a.3)] For small $\rho>0$, there exists $C>0$ such that
\begin{equation*}
|\gamma'''(t)| \le C \gamma''(t)^{3/2-\rho}.
\end{equation*}
\item[(a.4)] There exists $C>0$ such that
\begin{equation*}
\frac{1}{|\psi(t)|} \le C |\gamma'''(t)|^{1/3}.
\end{equation*}
\end{enumerate}

These assumptions are satisfied by $\gamma(t) = t^{-\beta}$ and 
$\psi(t) = t^{\alpha+1}$ if $\alpha$ and $\beta$ satisfy, as in 
\cite{Chandarana}, $\beta \ge 3\alpha$. Assumption (a.4) is not satisfied by our
example $\gamma(t) = e^{1/t}$ and $\psi(t) = e^{-1/2t}$ in 
\cite{FolchSaenz19}, but it is satisfied by the function $\psi(t) = e^{-1/3 t}$.
Alternatively, it is satisfied by the pair of functions 
$\gamma(t) = e^{3/t}$ and 
$\psi(t) = e^{-1/t}$.


\subsection{Existence}

It is not hard to see that the limit in \eqref{operator} exists for, say, 
$f\in C_c^\infty(\R^2)$. Indeed, if we write 
\begin{equation*}
\Gamma(s) = \int_0^s e^{2\pi i \gamma(t)} dt,
\end{equation*}
for $0 < \e' < \e$ we can integrate by parts to obtain
\begin{multline}\label{exis-parts}
\int_{\e'}^\e f(x-t,y-t^k) \frac{\Gamma'(t)}{\psi(t)} dt = \\
f(x-t,y-t^k) \frac{\Gamma(t)}{\psi(t)}\bigg|_{\e'}^\e -
\int_{\e'}^\e \Gamma(t) \frac{d}{dt}  \frac{f(x-t,y-t^k)}{\psi(t)} dt.
\end{multline}
The first term in \eqref{exis-parts} goes to zero as $\e',\e\to 0$ because, by
the van der Corput's lemma \cite[Proposition 2 in VIII.1.2]{Stein},
\begin{equation*}
|\Gamma(t)| \lesssim \frac{1}{\gamma''(t)^{1/2}},
\end{equation*}
and therefore, using assumptions (a.3) and (a.4) and the fact that $f$ is 
bounded,
\begin{equation*}
\bigg|f(x-t,y-t^k) \frac{\Gamma(t)}{\psi(t)}\bigg| \lesssim
\frac{1}{\gamma''(t)^{1/2}}|\gamma'''(t)|^{1/3} \lesssim
\frac{1}{\gamma''(t)^\rho} \to 0
\end{equation*}
as $t\to 0$.

For the second integral in \eqref{exis-parts}, we note that
\begin{multline}\label{exis-integral}
\bigg|\int_{\e'}^\e \Gamma(t) \frac{d}{dt}  \frac{f(x-t,y-t^k)}{\psi(t)} dt
\bigg| \le \\
\int_{\e'}^\e |\Gamma(t)| \Big|\frac{df(x-t,y-t^k)}{dt}\Big| 
\frac{1}{|\psi(t)|} dt \\ +
\int_{\e'}^\e |\Gamma(t)| |f(x-t,y-t^k)|
\Big|\frac{\psi'(t)}{\psi(t)^2}\Big| dt,
\end{multline}
and the first integral in \eqref{exis-integral} goes to 0 as above, because
the derivatives of $f$ are also bounded. For the second integral, choose 
$l\in\N$ such that $\gamma''^{-1}(2^{l+1}) < \e \le \gamma''^{-1}(2^l)$. As
$0 < \e' < \e \le \gamma''^{-1}(2^l)$, we have, using the fact that $f$ is 
bounded,
\begin{equation*}
\begin{split}
\int_{\e'}^\e |\Gamma(t)| &|f(x-t,y-t^k)|
\Big|\frac{\psi'(t)}{\psi(t)^2}\Big| dt\\
&\lesssim 
\int_0^{\gamma''^{-1}(2^l)} |\Gamma(t)|\Big|\frac{\psi'(t)}{\psi(t)^2}\Big| dt
\lesssim 
\sum_{j=l}^\infty \frac{1}{2^{j/2}} 
\int_{\gamma''^{-1}(2^{j+1})}^{\gamma''^{-1}(2^j)} 
\Big|\frac{\psi'(t)}{\psi(t)^2}\Big| dt,
\end{split}
\end{equation*}
where we have again used the van der Corput's lemma so
\begin{equation*}
|\Gamma(\gamma''^{-1}(2^l))| \lesssim
\frac{1}{\gamma''(\gamma''^{-1}(2^l))^{1/2}} = \frac{1}{2^{j/2}}.
\end{equation*}
Now, as $\psi$ is monotone, we have
\begin{equation*}
\begin{split}
\int_{\gamma''^{-1}(2^{j+1})}^{\gamma''^{-1}(2^j)} 
\Big|\frac{\psi'(t)}{\psi(t)^2}\Big| dt &= 
\bigg| \frac{1}{\psi(\gamma''^{-1}(2^{j+1}))} - 
\frac{1}{\psi(\gamma''^{-1}(2^j))} \bigg| \\
&\lesssim |\gamma'''(\gamma''^{-1}(2^{j+1}))|^{1/3},
\end{split}
\end{equation*}
where we have also used (a.4). Using (a.3), we see that the integral is then
bounded by
\begin{equation*}
\sum_{j=l}^\infty \frac{1}{2^{j/2}} \big( 2^{j/3}\big)^{3/2-\rho}
= \sum_{j=l}^\infty \frac{1}{2^{\rho/3}} \lesssim \frac{1}{2^l}, 
\end{equation*}
which goes to 0 as $l\to\infty$ (and thus as $\e',\e\to 0$).


\section{$L^2$ boundedness}

We will prove the following theorem.

\begin{theorem}\label{thm}
The operator $T$ given by \eqref{operator} extends to a bounded operator on
$L^2(\R^2)$.
\end{theorem}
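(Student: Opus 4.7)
The convolution structure of $T$ combined with Plancherel's theorem reduces the question to showing that the Fourier multiplier
\[
m(\xi,\eta) = \int_0^1 \frac{e^{2\pi i(\gamma(t)-\xi t-\eta t^k)}}{\psi(t)}\,dt
\]
is bounded uniformly in $(\xi,\eta)\in\R^2$. My plan is to exploit the oscillation of the phase $\Phi(t)=\gamma(t)-\xi t-\eta t^k$ via van der Corput's lemma after a dyadic decomposition adapted to $\gamma''$. Setting $t_j=(\gamma'')^{-1}(2^j)$ for $j\ge 0$ and $I_j=(t_{j+1},t_j]$, one has $\gamma''(t)\sim 2^j$ on $I_j$, and by (a.2) the quantities $|\gamma'''|$ and, via (a.4), $1/|\psi|$ are comparable throughout $I_j$. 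I write $m=\sum_{j\ge 0}m_j$ with $m_j$ the integral over $I_j$, and aim to control each $|m_j|$ by a quantity that sums to a constant independent of $(\xi,\eta)$.

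In the \emph{non-degenerate regime}, where $|k(k-1)\eta t^{k-2}|\le\tfrac12\gamma''(t)$ throughout $I_j$, the second derivative $\Phi''(t)=\gamma''(t)-k(k-1)\eta t^{k-2}$ satisfies $|\Phi''(t)|\sim 2^j$. Van der Corput's second-derivative estimate, applied with the monotone (hence BV) amplitude $1/\psi$, then gives
\[
|m_j|\lesssim 2^{-j/2}\bigl(\|\psi^{-1}\|_{L^\infty(I_j)}+V_{I_j}(\psi^{-1})\bigr)\lesssim 2^{-j/2}\cdot|\gamma'''(t_j)|^{1/3}\lesssim 2^{-j/2}\cdot 2^{j(1/2-\rho/3)}=2^{-j\rho/3},
\]
using monotonicity of $\psi$ from (a.1), assumption (a.4) to pass from $1/|\psi|$ to $|\gamma'''|^{1/3}$, and assumption (a.3) to convert $|\gamma'''|^{1/3}$ into a power of $\gamma''$. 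This yields a geometrically summable tail in $j$.

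The main obstacle is the \emph{degenerate regime}, where $|k(k-1)\eta t^{k-2}|$ is comparable to $\gamma''(t)$ on part of $I_j$. Since $\gamma''$ is strictly decreasing and $t\mapsto t^{k-2}$ is monotone, this balance can be achieved only for $j$ in a bounded window around a distinguished index $j_0=j_0(\eta)$, so only $O(1)$ such pieces ever arise; in particular, for $k=1$ no degenerate pieces appear. On each such $I_{j_0}$, $\Phi''$ has at most one sign change, and I would split $I_{j_0}$ into (at most) two sub-intervals of constant sign of $\Phi''$: away from the zero, second-derivative van der Corput still applies and reproduces the non-degenerate bound; on the small neighborhood of the zero of $\Phi''$ I would switch to the third-derivative version of van der Corput, bounding $|\Phi'''|$ from below by exploiting the size of $\eta$ forced by the degeneracy condition together with the upper bound on $|\gamma'''|$ in (a.3). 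Since only a bounded number of indices $j$ are treated this way, it suffices to obtain $|m_{j_0}|\le C$ uniformly; combined with the geometric sum from the non-degenerate pieces, this gives $\|m\|_\infty<\infty$ and hence the $L^2$ boundedness of $T$.
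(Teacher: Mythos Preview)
Your overall architecture is the same as the paper's: reduce to uniform boundedness of the multiplier, organize dyadically in $\gamma''$, use second-derivative van der Corput where $|\Phi''|\gtrsim\gamma''$, and third-derivative van der Corput on the $O(1)$ pieces near the zero of $\Phi''$. The non-degenerate estimate $|m_j|\lesssim 2^{-j\rho/3}$ is correct. Two points in your treatment of the degenerate regime are genuine gaps.

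First, your dichotomy is incomplete. Your non-degenerate condition $|k(k-1)\eta t^{k-2}|\le\tfrac12\gamma''(t)$ isolates only the region to the \emph{left} of the zero $t_0$ of $\Phi''$; once $t$ passes $t_0$, the ratio $k(k-1)\eta t^{k-2}/\gamma''(t)$ is monotone increasing and stays above $1$ for all larger $t$, so the complementary set is not an $O(1)$ window of dyadic indices but a half-line. The paper treats this as a separate ``third integral'': for $t$ with $\gamma''(t)\le\tfrac12\gamma''(t_0)$ one has $k(k-1)\eta t^{k-2}\ge k(k-1)\eta t_0^{k-2}=\gamma''(t_0)\ge 2\gamma''(t)$, hence $|\Phi''(t)|\ge\gamma''(t)$ again, and the second-derivative argument (with the same summable bound) applies. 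You need this region as well; only the $O(1)$ intervals around $t_0$ require the third derivative.

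Second, your proposed lower bound on $|\Phi'''|$ via ``the size of $\eta$ forced by degeneracy together with the upper bound on $|\gamma'''|$ in (a.3)'' is both roundabout and not obviously sufficient: at the degenerate scale the polynomial contribution is $\eta k(k-1)(k-2)t^{k-3}\sim\gamma''(t)/t$, and nothing in (a.1)--(a.4) forces this to dominate $|\gamma'''(t)|$. The clean observation, which the paper uses and which you should use instead, is that degeneracy of $\Phi''$ can occur only when $\eta>0$; since $\gamma''$ is decreasing one has $\gamma'''<0$, and $-\eta k(k-1)(k-2)t^{k-3}\le 0$ for $k\ge 2$, so $\Phi'''(t)\le\gamma'''(t)<0$ and hence $|\Phi'''(t)|\ge|\gamma'''(t)|$ outright. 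Then van der Corput together with (a.4) and (a.2) gives
\[
|m_{j_0}|\lesssim \frac{1}{|\gamma'''((\gamma'')^{-1}(2^{j_0-1}))|^{1/3}}\cdot\frac{1}{|\psi((\gamma'')^{-1}(2^{j_0+2}))|}\lesssim 1
\]
uniformly in $(\xi,\eta)$, which is exactly what you need.
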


In order to prove the $L^2$ boundedness of operator \eqref{operator} we need to
prove that its multiplier
\begin{equation}\label{multiplier}
m(\xi,\eta) = \int_0^1 e^{2\pi i (\gamma(t) - \xi t - \eta t^k)}
\frac{dt}{\psi(t)}, \quad \xi,\eta\in\R,
\end{equation}
is bounded. We write \eqref{multiplier} as the integral
\begin{equation}\label{mult-int}
\int_0^1 e^{2\pi i g(t)}\frac{dt}{\psi(t)},
\end{equation}
where $g(t) = \gamma(t) - \xi t - \eta t^k$. If we let 
\begin{equation*}
G(s) = \int_0^s e^{2\pi i g(t)} dt,
\end{equation*}
then we can also write \eqref{mult-int} as
\begin{equation*}
\int_0^1 \frac{G'(t)}{\psi(t)} dt.
\end{equation*}

The boundedness of \eqref{mult-int} will follow from estimates to $G(s)$,
obtained by van de Corput's lemma 
provided we can prove that the derivatives of $g(t)$ can be bounded from below,
uniformly for $\xi$ and $\eta$ in proper subsets of $\R^2$. The derivatives of
the function $g$ are given by
\begin{equation*}
g'(t) = \gamma'(t) - \xi - \eta kt^{k-1},
\end{equation*}
\begin{equation*}
g''(t) = \gamma''(t) - \eta k(k-1)t^{k-2},
\end{equation*}
and
\begin{equation*}
g'''(t) = \gamma'''(t) - \eta k(k-1)(k-2) t^{k-3}.
\end{equation*}
We thus observe that bounds from below for $g'$, $g''$ or $g'''$ will depend
on the signs of $\xi$ and $\eta$. We prove then the boundedness of
\eqref{mult-int} by splitting in such defined cases.


\subsection{Case $\eta \le 0$}
\label{section-neg-eta}

In this case we consider the second derivative $g''$. For $\eta \le 0$,
\begin{equation*}
g''(t) = \gamma''(t) - \eta k(k-1)t^{k-2} \ge \gamma''(t)
\end{equation*}
Hence, by van der Corput's lemma,
\begin{equation*}
|G(t)| \lesssim \frac{1}{\gamma''(t)^{1/2}}.
\end{equation*}
We integrate by parts to obtain
\begin{equation*}
\int_0^{1}\frac{G'(t)}{\psi(t)}dt =
\lim_{t\to0} \Big( \frac{G(1)}{\psi(1)}
- \frac{G(t)}{\psi(t)}\Big) + 
\int_0^{1} \frac{G(t)\psi'(t)}{\psi(t)^2}dt.
\end{equation*}
The limit is bounded because, using assumptions (a.3) and (a.4),
\begin{equation*}
\Big|\frac{G(t)}{\psi(t)}\Big| \lesssim 
\frac{1}{\gamma''(t)^{1/2}} |\gamma'''(t)|^{1/3} \lesssim
\frac{1}{\gamma''(t)^{\rho/3}} \le 1.
\end{equation*}
For the integral, we partition the interval 
$(0, 1]$ in the intervals
\begin{equation*}
\gamma''^{-1}\big((2^j, 2^{j+1}]\big), 
\end{equation*}
$j=0,1,2\ldots$, and we use the monotonicity of $\psi$, so $\psi'$
doesn't change and thus
\begin{equation*}
\begin{split}
\Big| \int_0^{1} \frac{G(t)\psi'(t)}{\psi(t)^2}dt \Big|
&\le 
\sum_{j=0}^\infty 
\int_{\gamma''^{-1}(2^{j+1})}^{\gamma''^{-1}(2^j)} |G(t)|
\Big| \frac{\psi'(t)}{\psi(t)^2} \Big| dt \\
&\lesssim \sum_{j=0}^\infty 
\int_{\gamma''^{-1}(2^{j+1})}^{\gamma''^{-1}(2^j)} \frac{1}{\gamma''(t)^{1/2}}
\Big| \frac{\psi'(t)}{\psi(t)^2} \Big| dt \\
&\le \sum_{j=0}^\infty \frac{1}{2^{j/2}}
\int_{\gamma''^{-1}(2^{j+1})}^{\gamma''^{-1}(2^j)} 
\Big| \frac{\psi'(t)}{\psi(t)^2} \Big| dt \\
&\le \sum_{j=0}^\infty \frac{1}{2^{j/2}}
\Big| \frac{1}{\psi(\gamma''^{-1}(2^j))} -
\frac{1}{\psi(\gamma''^{-1}(2^{j+1}))} \Big| \\
&\lesssim \sum_{j=0}^\infty \frac{1}{2^{j/2}}
|\gamma'''(\gamma''^{-1}(2^{j+1}))|^{1/3}\\
&\lesssim \sum_{j=0}^\infty \frac{1}{2^{j/2}} \big( 2^{j(3/2-\rho)} \big)^{1/3}
= \sum_{j=0}^\infty \frac{1}{2^{j\rho/3}} < \infty.
\end{split}
\end{equation*}

Note that, implicitly, we have assumed $k\ge 2$. The case $k=1$ provides no
difficulty, because in that case we have $g''(t) = \gamma''(t)$.

\subsection{Case $\eta > 0$.}

If $k \ge 2$ and $\eta > 0$, both $g'$ and $g''$ might be zero. Indeed, since
\begin{equation*}
g''(t) = \gamma''(t) - \eta k(k-1)t^{k-2},
\end{equation*}
$\gamma''>0$ and decreasing, and $\eta > 0$, then $-\eta t^{k-2}$ is also
decreasing and $g''$ will have a unique zero in $(0,1]$ if 
$\eta k(k-1) > \gamma''(1) = 1$.

Let $t_0$ be the zero of $g''(t)$ if $\eta k(k-1) > 1$; otherwise
let $t_0 = 1$. Hence $t_0$ satisfies
\begin{equation*}
\gamma''(t_0) = \eta k(k-1)t_0^{k-2}
\end{equation*}
if $t_0 < 1$, or $1 \ge \eta k(k-1)$ if $t_0 = 1$. Either way we have the
estimate 
\begin{equation}\label{t0est}
\gamma''(t_0) \ge \eta k(k-1)t_0^{k-2}.
\end{equation}

Let $l\in\N$ be such that
\begin{equation}\label{t0l}
\gamma''^{-1}(2^{l+1}) < t_0 \le \gamma''^{-1}(2^l).
\end{equation}
We thus split integral \eqref{mult-int} as
\begin{equation*}
\int_0^{\gamma''^{-1}(2^{l+2})} \frac{G'(t)}{\psi(t)}dt
+ \int_{\gamma''^{-1}(2^{l+2})}^{\gamma''^{-1}(2^{l-1})} \frac{G'(t)}{\psi(t)}dt
+ \int_{\gamma''^{-1}(2^{l-1})}^1 \frac{G'(t)}{\psi(t)}dt,
\end{equation*}
where the third integral is nonzero only if $l>1$ (if $l=0$, the second integral
is only over the interval $[\gamma''^{-1}(4),1]$, of course). We estimate each
of these integrals separately.

\subsubsection{The first integral}
\label{first-int}

Here we integrate over $t$ such that $0 < t \le \gamma''^{-1}(2^{l+2})$. For
such $t$ we have, using the monotonicity of $\gamma''$, \eqref{t0est} and \eqref{t0l},
\begin{equation*}
\begin{split}
\gamma''(t) &\ge 2^{l+2} = 2\cdot 2^{l+1} = 2\gamma''(\gamma''^{-1}(2^{l+1}))\\
&\ge 2\gamma''(t_0) \ge 2\eta k(k-1)t_0^{k-2} \ge 2\eta k(k-1)t^{k-2}.
\end{split}
\end{equation*}
Hence
\begin{equation*}
g''(t) \ge \frac{1}{2}\gamma''(t)
\end{equation*}
over $0 < t \le \gamma''^{-1}(2^{l+2})$, and by van der Corput's lemma,
\begin{equation*}
|G(t)| \lesssim \frac{1}{\gamma''(t)^{1/2}}
\end{equation*}
over this interval. We can therefore estimate the first integral as in 
\ref{section-neg-eta}.

\subsubsection{The third integral}
\label{third-int}

If $l > 1$, we integrate over $\gamma''^{-1}(2^{l-1}) \le t \le 1$. This implies
that $t_0 < 1$ so, in particular, $\gamma''(t_0) = \eta k(k-1)t_0^{k-2}$. Using
the monotonicity of $\gamma''$ and \eqref{t0l},
\begin{equation*}
\begin{split}
\eta k(k-1)t^{k-2} &\ge  \eta k(k-1) t_0^{k-2} = \gamma''(t_0) \\
&\ge 2^l = 2 \gamma''(\gamma''^{-1}(2^{l-1})) \ge \gamma''(t),
\end{split}
\end{equation*}
so we have again, by van der Corput's lemma,
\begin{equation*}
|G(t)| \lesssim \frac{1}{\gamma''(t)^{1/2}}
\end{equation*}
and we proceed as above.

\subsubsection{The middle integral}
\label{middle-int}

We finally integrate over the interval that contains $t_0$. Since $-\eta$ is
negative, we have
\begin{equation*}
g'''(t) = \gamma'''(t) - \eta k(k-1)(k-2)t^{k-3} < \gamma'''(t),
\end{equation*}
or $g'''(t) = \gamma'''(t)$ if $k=2$.
Thus
\begin{equation*}
|g'''(t)| \ge |\gamma'''(t)|.
\end{equation*}
for $\gamma''^{-1}(2^{l+2}) \le t \le \gamma''^{-1}(2^{l-1})$. By the van der
Corput's lemma and hypothesis (a.4) we obtain
\begin{equation*}
\begin{split}
\Big| 
\int_{\gamma''^{-1}(2^{l+2})}^{\gamma''^{-1}(2^{l-1})}
\frac{e^{2\pi i g(t)}}{\psi(t)}dt \Big| &\lesssim
\frac{1}{|\gamma'''(\gamma''^{-1}(2^{l-1}))|^{1/3}}
\frac{1}{|\psi(\gamma''^{-1}(2^{l+2}))|}\\
&\lesssim
\frac{1}{|\gamma'''(\gamma''^{-1}(2^{l-1}))|^{1/3}}
|\gamma'''(\gamma''^{-1}(2^{l+2}))|^{1/3}
\end{split}
\end{equation*}
which is bounded by a constant by assumption (a.2).

\section{A result on sharpness}

Assume that $\gamma$ satifies assumptions (a.1)-(a.3). However, instead
of assumption (a.4), we now have
\begin{enumerate}
\item[(b.4)]
There exists $C>0$ such that, for $t\in(0,1]$,
\begin{equation*}
\frac{1}{|\psi(t)|} \le C \gamma''(t)^{1/2 - \rho},
\end{equation*}
where $\rho$ can be taken to be the same as in (a.3) above.
\item[(b.5)] 
The quotient
\begin{equation*}
\frac{1}{|\psi(t)| |\gamma'''(t)|^{1/3}}
\end{equation*}
is unbounded.
\end{enumerate}

\begin{proposition}
Under the previous hypotheses, with $k=2$, the multiplier $m(\xi, \eta)$ for
the parabola is unbounded.
\end{proposition}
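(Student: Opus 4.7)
The plan is to choose $\xi$ and $\eta$ so that $g(t)=\gamma(t)-\xi t-\eta t^2$ has a degenerate critical point at a prescribed $t_0\in(0,1]$, extract a main term of size $1/(|\psi(t_0)|\,|\gamma'''(t_0)|^{1/3})$ from a neighborhood of $t_0$, and invoke (b.5) to conclude that $m$ is unbounded. Concretely, for each $t_0\in(0,1]$ I would take
\[
\eta=\frac{\gamma''(t_0)}{2},\qquad \xi=\gamma'(t_0)-\gamma''(t_0)\,t_0,
\]
which, since $k=2$, gives $g'(t_0)=g''(t_0)=0$ and $g'''(t)=\gamma'''(t)$.

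For the main contribution I would set $\delta=c\,|\gamma'''(t_0)|^{-1/3}$ with $c$ a small absolute constant. Taylor expansion gives $g(t)-g(t_0)=\gamma'''(\xi^*)(t-t_0)^3/6$, and monotonicity of $\gamma'''$ together with (a.2) (applied to the dyadic $\gamma''$-blocks straddling $t_0$) shows that $|\gamma'''(\xi^*)|\le C|\gamma'''(t_0)|$ whenever $|t-t_0|\le\delta$; thus $|g(t)-g(t_0)|<1/16$ on that interval, so $\cos(2\pi(g(t)-g(t_0)))>1/\sqrt{2}$. Combined with $|\psi(t)|\le 2|\psi(t_0)|$ by continuity, taking the real part of $e^{-2\pi i g(t_0)}$ times the integral over $|t-t_0|<\delta$ gives
\[
\Bigl|\int_{|t-t_0|<\delta}\frac{e^{2\pi i g(t)}}{\psi(t)}\,dt\Bigr| \gtrsim \frac{\delta}{|\psi(t_0)|}\sim \frac{1}{|\psi(t_0)|\,|\gamma'''(t_0)|^{1/3}}.
\]

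For the error I would decompose $(0,1]\setminus(t_0-\delta,t_0+\delta)$ as in Section 2.2, letting $l$ be the integer with $\gamma''^{-1}(2^{l+1})<t_0\le\gamma''^{-1}(2^l)$. On the outer pieces $(0,\gamma''^{-1}(2^{l+2})]$ and $[\gamma''^{-1}(2^{l-1}),1]$ one has $|g''(t)|\ge\gamma''(t)/2$, so the argument of Subsection 2.1 applies directly with (b.4) in place of (a.4): the dyadic sum that was $\sum 2^{-j/2}(2^j)^{(3/2-\rho)/3}$ becomes $\sum 2^{-j/2}(2^j)^{1/2-\rho}=\sum 2^{-j\rho}$, still convergent. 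On the middle-minus-core piece $[\gamma''^{-1}(2^{l+2}),\gamma''^{-1}(2^{l-1})]\setminus(t_0-\delta,t_0+\delta)$, the condition $|t-t_0|\ge\delta$ together with $g''(t_0)=0$, the mean value theorem, and (a.2) gives $|g''(t)|\gtrsim|\gamma'''(t_0)|^{2/3}$; a further dyadic decomposition of this region by the size of $|g''|$, estimated by van der Corput with the second derivative and integrated by parts against $1/\psi$, yields an $O(1)$ bound via (b.4) and (a.3).

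Combining the estimates, $|m(\xi,\eta)|\gtrsim 1/(|\psi(t_0)|\,|\gamma'''(t_0)|^{1/3})-M$ uniformly in $t_0$, and (b.5) lets me pick a sequence $t_{0,n}$ along which the first term is arbitrarily large. The main obstacle I anticipate is the dyadic estimate on the middle-minus-core region: a blunt application of van der Corput there produces a bound of the same order as the main term, so careful bookkeeping of how $|g''|$, (a.3), and (b.4) interact across dyadic scales is needed to ensure this remainder really is $O(1)$ rather than a cancelling term of the main order.
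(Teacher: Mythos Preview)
Your setup coincides with the paper's: the same choice of $\xi$ and $\eta$ making $g'(t_0)=g''(t_0)=0$, the same dyadic localization via the integer $l$, and the same van der Corput argument on the outer pieces $(0,\gamma''^{-1}(2^{l+2})]$ and $[\gamma''^{-1}(2^{l-1}),1]$, now with (b.4) replacing (a.4). That part is correct.

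The gap is precisely where you suspect it: the middle-minus-core region. On your dyadic shells $|t-t_0|\in[2^m\delta,2^{m+1}\delta]$ one has $|g''(t)|\sim 2^m|\gamma'''(t_0)|^{2/3}$, so second-derivative van der Corput together with integration by parts against $1/\psi$ gives, after summing in $m\ge0$,
\[
\sum_{m\ge0}\bigl(2^m|\gamma'''(t_0)|^{2/3}\bigr)^{-1/2}\,\sup\frac{1}{|\psi|}
\ \lesssim\ \frac{1}{|\gamma'''(t_0)|^{1/3}\,|\psi(t_0)|},
\]
which is the \emph{same order} as your main term, not $O(1)$. Replacing $1/|\psi|$ by the upper bound from (b.4) does not help: one then obtains $\lesssim (2^l)^{1/2-\rho}/|\gamma'''(t_0)|^{1/3}$, and since $1/|\psi(t_0)|\lesssim (2^l)^{1/2-\rho}$ by (b.4), along the sequence from (b.5) this quantity is at least $1/(|\psi(t_n)|\,|\gamma'''(t_n)|^{1/3})\to\infty$. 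Assumption (a.3) is only an \emph{upper} bound on $|\gamma'''|$ and therefore gives no lower bound on $|\gamma'''(t_0)|^{1/3}$ to close the estimate. In short, van der Corput alone cannot separate the core contribution from the middle-minus-core; they are genuinely of the same size and you have not ruled out cancellation.

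The paper avoids this separation altogether. It introduces a smooth cutoff $h$ supported on the middle block and invokes the stationary-phase asymptotic for a degenerate (cubic) critical point, citing Erd\'elyi, on the \emph{entire} middle integral $\int h(t)\,e^{2\pi i g(t)}/\psi(t)\,dt$, obtaining directly
\[
\sim\ C\,\frac{e^{2\pi i g(t_n)}}{\psi(t_n)\,\gamma'''(t_n)^{1/3}}
\]
with a specific nonzero constant $C$. The integrals against $1-h$ are then shown to be $O(1)$ exactly as you do. The missing ingredient in your argument is that Airy-type asymptotic; if you wish to avoid citing it you must essentially reprove the cubic stationary-phase lemma, which requires more than dyadic bookkeeping with (b.4) and (a.3).

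One minor point: ``$|\psi(t)|\le 2|\psi(t_0)|$ by continuity'' is not uniform in $t_0$. Use the monotonicity of $\psi$ instead (e.g.\ restrict to the half-core $[t_0-\delta,t_0]$, where $|\psi(t)|\le|\psi(t_0)|$).
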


\begin{proof}
Let $t_n\to0$ such that 
\begin{equation*}
\frac{1}{|\psi(t_n)| |\gamma'''(t_n)|^{1/3}} \to \infty.
\end{equation*}
Now, for each $n$, set 
\begin{equation*}
\eta_n = \frac{1}{2}\gamma''(t_n) \qqand
\xi_n = \gamma'(t_n) - \gamma''(t_n) t_n.
\end{equation*}
Note that this implies that $g'(t_n) = g''(t_n) = 0$. We claim that
$|m(\xi_n,\eta_n)| \to \infty$.

Fix $n\in\N$ and choose $l = l(n)\in\N$ such that
\begin{equation*}
\gamma''^{-1}(2^{l+1}) < t_n \le \gamma''^{-1}(2^l).
\end{equation*}
Let $h\in C^\infty((0,1))$ be a cutoff function such that
\begin{enumerate}
\item $\supp h \subset [\gamma''^{-1}(2^{l+3}),\gamma''^{-1}(2^{l-2})]$;
\item $h(t) = 1$ on $[\gamma''^{-1}(2^{l+2}),\gamma''^{-1}(2^{l-1})]$; and
\item $h$ is increasing on $[\gamma''^{-1}(2^{l+3}),\gamma''^{-1}(2^{l+2})]$
and decreasing on $[\gamma''^{-1}(2^{l-1}),\gamma''^{-1}(2^{l-2})]$.
\end{enumerate}
Hence, with the notation used above,
\begin{equation}\label{sharp-mul}
\begin{split}
m(\xi_n,\eta_n) = \int_0^1 \frac{G'(t)}{\psi(t)} dt
&=
\int_0^{\gamma''^{-1}(2^{l+2})} \frac{G'(t)}{\psi(t)}(1 - h(t)) dt\\
&\hspace*{.3in} + 
\int_{\gamma''^{-1}(2^{l+3})}^{\gamma''^{-1}(2^{l-2})} \frac{G'(t)}{\psi(t)}
h(t) dt\\
&\hspace*{.5in} + 
\int_{\gamma''^{-1}(2^{l-1})}^1 \frac{G'(t)}{\psi(t)}(1 - h(t)) dt.
\end{split}
\end{equation}

To estimate the first integral, se integrate by parts to obtain
\begin{equation}\label{sharp-first-int}
\begin{split}
\int_0^{\gamma''^{-1}(2^{l+2})} &\frac{G'(t)}{\psi(t)}(1 - h(t)) dt\\
&=
- \lim_{t\to0} \frac{G(t)}{\psi(t)} +
\int_0^{\gamma''^{-1}(2^{l+2})} \frac{G(t)\psi'(t)}{\psi(t)^2}(1 - h(t))dt\\
&\hspace{.5in} -
\int_{\gamma''^{-1}(2^{l+3})}^{\gamma''^{-1}(2^{l+2})} \frac{G(t)}{\psi(t)} h'(t)dt.
\end{split}
\end{equation}

As in Section \ref{first-int}, if $0 < t < \gamma''^{-1}(2^{l+2})$,
\begin{equation*}
\gamma''(t) \ge 2^{l+2} = 2\gamma''(\gamma''^{-1}(2^{l+1}))
\ge 2\gamma''(t_n) = 4\eta_n,
\end{equation*}
so
\begin{equation*}
g''(t) = \gamma''(t) -2\eta_n \ge \frac{1}{2}\gamma''(t)
\end{equation*}
and, by van der Corput's lemma,
\begin{equation*}
|G(t)| \lesssim \frac{1}{\gamma''(t)^{1/2}}
\end{equation*}
and, by hypothesis (b.4),
\begin{equation*}
\Big| \frac{G(t)}{\psi(t)}\Big| \lesssim \frac{1}{\gamma''(t)^\rho}.
\end{equation*}
This implies that the limit in \eqref{sharp-first-int} is zero, as well as 
the third integral is bounded, as
\begin{multline*}
\Big|\int_{\gamma''^{-1}(2^{l+3})}^{\gamma''^{-1}(2^{l+2})} 
\frac{G(t)}{\psi(t)} h'(t)dt\Big| \\
\le \int_{\gamma''^{-1}(2^{l+3})}^{\gamma''^{-1}(2^{l+2})}
\Big|\frac{G(t)}{\psi(t)}\Big| h'(t)dt \lesssim \frac{1}{2^l}
\int_{\gamma''^{-1}(2^{l+3})}^{\gamma''^{-1}(2^{l+2})}h'(t)dt = 2^{-l},
\end{multline*}
because $h$ is increasing from $0$ to $1$ in the interval
$[\gamma''^{-1}(2^{l+3}),\gamma''^{-1}(2^{l+2})]$.

For the second integral in \eqref{sharp-first-int}, we decompose the interval
$(0,\gamma''^{-1}(2^{l+2})]$ and use the fact that $\psi$
is monotone, as in Section \ref{section-neg-eta}, to obtain
\begin{equation*}
\begin{split}
\Big| \int_0^{\gamma''^{-1}(2^{l+2})} 
&\frac{G(t)\psi'(t)}{\psi(t)^2}(1 - h(t))dt \Big|
\le 
\sum_{j=l+2}^\infty 
\int_{\gamma''^{-1}(2^{j+1})}^{\gamma''^{-1}(2^j)} |G(t)|
\Big| \frac{\psi'(t)}{\psi(t)^2} \Big| dt \\
&\lesssim \sum_{j=l+2}^\infty 
\int_{\gamma''^{-1}(2^{j+1})}^{\gamma''^{-1}(2^j)} \frac{1}{\gamma''(t)^{1/2}}
\Big| \frac{\psi'(t)}{\psi(t)^2} \Big| dt \\
&\le \sum_{j=l+2}^\infty \frac{1}{2^{j/2}}
\int_{\gamma''^{-1}(2^{j+1})}^{\gamma''^{-1}(2^j)} 
\Big| \frac{\psi'(t)}{\psi(t)^2} \Big| dt \\
&\le \sum_{j=l+2}^\infty \frac{1}{2^{j/2}}
\Big| \frac{1}{\psi(\gamma''^{-1}(2^j))} -
\frac{1}{\psi(\gamma''^{-1}(2^{j+1}))} \Big| \\
&\lesssim \sum_{j=l+2}^\infty \frac{1}{2^{j/2}}
\gamma''(\gamma''^{-1}(2^{j+1}))^{1/2-\rho}\\
&\lesssim \sum_{j=l+2}^\infty \frac{1}{2^{j/2}} (2^j)^{1/2-\rho}
= \sum_{j=l+2}^\infty \frac{1}{2^{j\rho}} < \infty.
\end{split}
\end{equation*}
The third integral in \eqref{sharp-mul} can be estimated similarly. We conclude
then that both of these integrals are bounded.

Now, since $g'(t_n) = g''(t_n) = 0$, we have the asymptotic estimate
\begin{equation*}
\int_{\gamma''^{-1}(2^{l+3})}^{\gamma''^{-1}(2^{l-2})}
\frac{e^{2\pi i g(t)}}{\psi(t)} h(t) dt \sim
C \frac{e^{2\pi ig(t_n)}}{\psi(t_n)(\gamma'''(t_n))^{1/3}}
\end{equation*}
for the second integral in \eqref{sharp-mul}, where $C$ is independent of $n$
\cite{Erdelyi}. By the construction of the $t_n$, we have that
\begin{equation*}
|m(\xi_n,\eta_n)| \to \infty
\end{equation*}
as $t\to\infty$.
\end{proof}

\section[An $L^p$ result]{An extension to $L^p$ spaces}

In this section we consider the operator 
\begin{equation}\label{operTheta}
Tf(x,y) = \lim_{\e\to 0} \int_\e^1 f(x - t, y - t^k)
\frac{e^{2\pi i\gamma(t)}}{t^\theta \, \psi(t)^{1 - \theta}} dt,
\end{equation}
for $0< \theta < 1$, and $\gamma$ and $\psi$ satisfy hypotheses 
(a.1)-(a.4) above, as well as assuming that $\psi > 0$. We also make the 
following assumption on $\gamma''$:
\begin{enumerate}
\item[(a.5)] There exists a constant $\epsilon>0$, with $(1+\epsilon)^2 < 2$,
such that, for $0 < t < t_0$, 
\begin{equation*}
\gamma''(t) \ge 2 \gamma''((1+\epsilon)t).
\end{equation*}
\end{enumerate}
We remark that assumption (a.5) is analogous to assumption (a.4) in 
\cite{FolchSaenz19}, which stated that 
\begin{itemize}
\item[] ``there exist $\epsilon > 0, A > 1 + \epsilon$ such that 
$\gamma'(t) \ge A \gamma'((1+\epsilon)t)$ for for $0 < t < t_0$''.
\end{itemize}
It is expected, as we are now working along the curve $t\mapsto(t,t^k)$, to
require a stronger condition to the corresponding one in \cite{FolchSaenz19}, so
this time we need to assume such condition on $\gamma''$.

By substituting $(1+\epsilon)t$ by $\gamma''(s)$ we get
\begin{equation}\label{doublinggamma2}
\gamma''^{-1}(s) \le (1+\epsilon) \gamma''^{-1}(2s)
\end{equation}
for large $s>1$. Together with the fact that $\gamma''^{-1}$ is decreasing we
obtain that 
\begin{equation}\label{gamma2comp}
\gamma''^{-1}(2s) \le \gamma''^{-1}(s) \lesssim \gamma''^{-1}(2s)
\end{equation}
for large values of $s$, so $\gamma''^{-1}(s)$ and $\gamma''^{-1}(2s)$ have
comparable sizes. This is analogous to the fact that 
$|\gamma'''(\gamma''^{-1}(s))|$ and $|\gamma'''(\gamma''^{-1}(2s))|$ also have
comparable sizes, as, from assumption (a.2), we have that 
\begin{equation}\label{gamma3comp}
|\gamma'''(\gamma''^{-1}(s))| \le |\gamma'''(\gamma''^{-1}(2s))|
\lesssim |\gamma'''(\gamma''^{-1}(s))|.
\end{equation}

\begin{remark}
The function $\gamma(t) = t^{-\beta}$, with $\beta>0$, also satisfies
assumption (a.5). Indeed, $\gamma''(t) = \beta(\beta+1)t^{-\beta-2}$, so
\begin{equation*}
\gamma''(t) = (1+\epsilon)^{\beta+2} \gamma''((1+\epsilon)t) 
\ge 2 \gamma''((1+\epsilon)t)
\end{equation*}
if we choose $(1+\epsilon)^{\beta+2} = 2$, which satisfies $(1+\epsilon)^2 < 2$
because $\beta > 0$.
\end{remark}

\begin{remark}
Functions $\gamma$ such that $\gamma''$ is of the form 
$\gamma''(t) = c e^{\sigma/t}$, for $\sigma>0$, also satisfy (a.5).
Indeed, 
\begin{equation*}
\gamma''(t) =
e^{\frac{\sigma}{t}\frac{\epsilon}{1+\epsilon}}\gamma''((1+\epsilon)t)
\ge 2 \gamma''((1+\epsilon)t)
\end{equation*}
for $\epsilon = \log 2^{1/3} < 2^{1/2}-1$ as long as 
$t < \sigma/\big( 3(1+\epsilon)\big)$.
\end{remark}

\begin{lemma}\label{gamma3gamma2comp}
There exist constants $C>0$ and $\delta>0$ such that, for small $t>0$,
\begin{equation*}
|\gamma'''(t)| \ge C \frac{\gamma''(t)^\delta}{t^3}.
\end{equation*}
\end{lemma}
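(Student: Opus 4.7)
The plan is to extract from (a.5) two separate consequences and combine them: first, a pointwise lower bound of the form $|\gamma'''(t)|\gtrsim \gamma''(t)/t$, and second, a power-type lower bound $\gamma''(t)\gtrsim t^{-\kappa}$ with exponent $\kappa>2$. Together these yield the claim with any $\delta\in(0,\,1-2/\kappa)$.

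For the pointwise bound I would apply the mean value theorem to $\gamma''$ on the interval $[t,(1+\epsilon)t]$: there exists $\tau\in(t,(1+\epsilon)t)$ such that $\gamma''(t)-\gamma''((1+\epsilon)t) = |\gamma'''(\tau)|\,\epsilon t$ (using $\gamma'''<0$, which follows from $\gamma''$ being decreasing). Assumption (a.5) gives $\gamma''((1+\epsilon)t)\le \tfrac12\gamma''(t)$, so $|\gamma'''(\tau)|\ge \gamma''(t)/(2\epsilon t)$. To transfer this to $|\gamma'''(t)|$ I would observe that $|\gamma'''|$ must be decreasing as a function of $t$: $\gamma'''$ is monotone by (a.1), and if $|\gamma'''|$ were bounded on $(0,1]$ then $\gamma''(t)=\gamma''(1)+\int_t^1|\gamma'''(s)|ds$ would be bounded, contradicting $\gamma''(t)\to\infty$. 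Hence $|\gamma'''(t)|\ge |\gamma'''(\tau)|\ge \gamma''(t)/(2\epsilon t)$.

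For the power-type lower bound I would iterate (a.5): for each $n$, $\gamma''(t)\ge 2^n\gamma''((1+\epsilon)^n t)$. Fix a small $t^*<t_0$ and, given $t$ small, choose $n$ with $(1+\epsilon)^{-n-1}t^*<t\le (1+\epsilon)^{-n}t^*$. Then
\[
\gamma''(t)\ge \gamma''((1+\epsilon)^{-n}t^*)\ge 2^n\gamma''(t^*),
\]
and rewriting $2^n=((1+\epsilon)^n)^{\log 2/\log(1+\epsilon)}$ together with the trapping of $t$ between two consecutive scales gives $\gamma''(t)\gtrsim t^{-\kappa}$ for any $\kappa<\log 2/\log(1+\epsilon)$. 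The assumption $(1+\epsilon)^2<2$ forces $\log 2/\log(1+\epsilon)>2$, so we may fix some $\kappa>2$.

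Combining, for small $t$,
\[
|\gamma'''(t)|\;\gtrsim\;\frac{\gamma''(t)}{t}\;=\;\frac{\gamma''(t)^\delta\,\gamma''(t)^{1-\delta}}{t}\;\gtrsim\;\frac{\gamma''(t)^\delta}{t^{\,1+\kappa(1-\delta)}},
\]
and since $t<1$, any choice $\delta\le 1-2/\kappa$ (which is positive) makes $1+\kappa(1-\delta)\ge 3$, giving the desired inequality. The main delicate point is the monotonicity direction of $|\gamma'''|$, which is not stated explicitly in (a.1) and must be deduced from the blow-up hypothesis on $\gamma''$; the iteration of (a.5) to non-dyadic scales is standard but requires bookkeeping to track the constant and verify that the exponent $\kappa$ can be taken strictly greater than $2$.
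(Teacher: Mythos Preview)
Your argument is correct and follows essentially the same two-step route as the paper: first deduce $|\gamma'''(t)|\gtrsim \gamma''(t)/t$ from (a.5), then iterate (a.5) to obtain $\gamma''(t)\gtrsim t^{-\kappa}$ with $\kappa=\log 2/\log(1+\epsilon)>2$, and combine. The only notable difference is that you explicitly justify why $|\gamma'''|$ must be decreasing (via the blow-up of $\gamma''$), whereas the paper simply asserts this monotonicity and uses it to bound the integral $\int_t^{(1+\epsilon)t}(-\gamma''')\,ds$ directly rather than invoking the mean value theorem.
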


\begin{proof}
We first procced as in \cite{Folch99} to verify that
\begin{equation}\label{gamma3vsgamma2}
|\gamma'''(t)| \gtrsim \frac{\gamma(t)}{t}.
\end{equation}
Indeed, as $t\mapsto -\gamma'''(t)$ is decreasing we have
\begin{equation*}
-\gamma'''(t)\cdot \epsilon t \ge \int_t^{(1+\epsilon)t} -\gamma'''(s)ds = 
\gamma''(t) - \gamma''((1+\epsilon)t) \ge \frac{1}{2}\gamma''(t)
\end{equation*}
by assumption (a.5). So we obtain
\begin{equation*}
|\gamma'''(t)| \ge \frac{1}{2\epsilon} \frac{\gamma''(t)}{t}.
\end{equation*}

The lemma will follow from \eqref{gamma3vsgamma2} once we establish the
existence of $\delta>0$ such that
\begin{equation}\label{gamma2delta}
\gamma''(t) \gtrsim \frac{\gamma''(t)^\delta}{t^2}.
\end{equation}
To verify this, observe that, by assumption (a.5),
\begin{equation*}
\gamma''\Big( \frac{t_0}{(1+\epsilon)^n}\Big) \ge 2^n \gamma''(t_0) = 
C \Big(\frac{2}{1+\epsilon} \Big)^{n+1} \frac{1}{t_0/(1+\epsilon)^{n+1}}
\end{equation*}
for every $n\in\N$, where $C = t_0\gamma''(t_0)/2$. For $t\in(0,t_0)$, choose
$n$ such that
\begin{equation*}
\frac{t_0}{(1+\epsilon)^{n+1}} \le t < \frac{t_0}{(1+\epsilon)^n}.
\end{equation*}
Then $(1+\epsilon)^{n+1} \ge t_0/t$ and hence 
$n+1 \ge \log(t_0/t)/\log(1+\epsilon)$, which implies
\begin{equation*}
\Big(\frac{2}{1+\epsilon} \Big)^{n+1} \ge \Big(\frac{t_0}{t}\Big)^\alpha,
\end{equation*}
where
\begin{equation*}
\alpha = \frac{\log(2/(1+\epsilon))}{\log(1+\epsilon)} > 1
\end{equation*}
because $2 < (1+\epsilon)^2$. Therefore
\begin{equation}
\gamma''(t) > \gamma''\Big( \frac{t_0}{(1+\epsilon)^n}\Big) \ge
C \cdot \Big(\frac{t_0}{t}\Big)^\alpha \cdot \frac{1}{t} 
= \frac{C'}{t^{1+\alpha}}.
\end{equation}
If we choose $\delta = (\alpha-1)/(1+\alpha)$ we obtain
\begin{equation*}
\gamma''(t) > \gamma''(t)^\delta \cdot 
\Big( \frac{C'}{t^{1+\alpha}} \Big)^{1-\delta} 
= C'' \frac{\gamma''(t)^\delta}{t^2}.
\end{equation*}
\end{proof}

\begin{remark}\label{seriescon}
Substituting $t$ by $\gamma''^{-1}(s)$ in Lemma \ref{gamma3gamma2comp} we
obtain
\begin{equation*}
\gamma''^{-1}(s) |\gamma'''(\gamma''^{-1}(s))|^{1/3} \gtrsim s^{\delta/3},
\end{equation*}
which guarantees, in particular, that for any $\e>0$ the series
\begin{equation*}
\sum_{j=0}^\infty \frac{1}{\big(
\gamma''^{-1}(2^j) |\gamma'''(\gamma''^{-1}(2^j))|^{1/3}\big)^\e} < \infty.
\end{equation*}
\end{remark}

The existence of the limit in \eqref{operTheta} can be verified analogously
to the existence of the limit in \eqref{operator} above. From the proof of
Theorem \ref{thm}, in particular, the estimates for the multiplier
\eqref{mult-int}, we obtain the following result.

\begin{corollary}\label{corp}
Under the assumptions (a.1-a.5), the operator \eqref{operTheta} is bounded on
$L^p(\R^2)$ if
\begin{equation}\label{condTheta}
\frac{1}{2}\le \frac{1}{p} < \frac{1+\theta}{2}.
\end{equation}
\end{corollary}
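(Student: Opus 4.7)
The plan is to realize $T$ from \eqref{operTheta} as the value at $z=\theta$ of the analytic family
\[
T_z f(x,y) = \lim_{\e\to 0}\int_\e^1 f(x-t,y-t^k)\,\frac{e^{2\pi i\gamma(t)}}{t^z\,\psi(t)^{1-z}}\,dt,\qquad 0\le \Re z\le 1,
\]
where the positivity of $\psi$ makes $\psi^{1-z}:=\exp((1-z)\log\psi)$ well defined. Stein's analytic interpolation theorem, applied with polynomial-growth bounds on the two edges of the strip, will then yield the required range of $L^p$ boundedness.

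On the left edge $\Re z=0$, write $z=ib$. The integrand of $T_{ib}$ differs from that of the operator in Theorem \ref{thm} only by the unimodular factor $(\psi(t)/t)^{ib}=e^{ib\log(\psi(t)/t)}$. Rerunning the multiplier estimates of Section 2 with this bounded extra phase, each application of van der Corput's lemma or of integration by parts picks up a factor bounded by $C(1+|b|)$ coming from the derivative of $\log(\psi(t)/t)$, which by (a.1) is monotone and hence contributes bounded total variation on each dyadic piece. Remark \ref{seriescon} keeps the dyadic series summable, so
\[
\|T_{ib}\|_{L^2\to L^2}\le A(1+|b|)^N
\]
for some constants $A,N$ depending only on the data.

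On the right edge $\Re z=1$, write $z=1+ib$. The kernel becomes $e^{2\pi i\gamma(t)}(t/\psi(t))^{ib}/t$: a bounded unimodular factor times the Hilbert-type kernel $e^{2\pi i\gamma(t)}/t$ along the well-curved polynomial curve $(t,t^k)$. The unoscillated Hilbert transform along this curve is bounded on every $L^q(\R^2)$, $1<q<\infty$, by the theorems of Fabes--Rivi\`ere, Stein--Wainger and Nagel--Rivi\`ere--Wainger; the strongly oscillating factor $e^{2\pi i\gamma(t)}$ only provides additional cancellation, and a dyadic decomposition in $t$ together with the multiplier bounds of Theorem \ref{thm}---now applied with $\psi$ replaced by $t$ in the denominator, which is consistent with (a.3)--(a.4) and Lemma \ref{gamma3gamma2comp}---yields
\[
\|T_{1+ib}\|_{L^q\to L^q}\le B_q(1+|b|)^N
\]
for every $q\in(1,\infty)$.

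With these two boundary estimates, Stein's interpolation theorem produces, for each $\theta\in(0,1)$ and each choice of $q\in(1,\infty)$,
\[
\|T_\theta\|_{L^p\to L^p}<\infty,\qquad \frac{1}{p}=\frac{1-\theta}{2}+\frac{\theta}{q}.
\]
Letting $q$ range over $(1,2]$, $1/p$ sweeps exactly the half-open interval $[1/2,(1+\theta)/2)$ in \eqref{condTheta}. The main obstacle I anticipate is the uniform endpoint bound on $\Re z=1$ for $q$ near $1$: verifying that the strongly oscillatory Hilbert transform along $(t,t^k)$ remains bounded on $L^q$ as $q\to 1^+$ with only polynomial dependence on $|b|$. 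This is precisely where the additional hypothesis (a.5) and the consequent summability in Remark \ref{seriescon} become essential, playing a role analogous to that of (a.4) in \cite{FolchSaenz19}.
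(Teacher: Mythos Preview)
Your approach via Stein's analytic interpolation is different from the paper's, which instead decomposes $T=\sum_j T_j$ dyadically along the level sets of $\gamma''$, proves a trivial $L^1\to L^1$ bound \eqref{L1est} and a van der Corput $L^2\to L^2$ bound \eqref{L2est} for each piece, real-interpolates piece by piece, and then sums the resulting series using Remark~\ref{seriescon}. No analytic family appears.

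Your proposal, however, has a real gap at the right edge $\Re z=1$. The operator $T_{1+ib}$ has kernel $e^{2\pi i\gamma(t)}(\psi(t)/t)^{-ib}/t$ on the one-sided interval $(0,1)$ and carries the strong phase $\gamma$; this is \emph{not} the Hilbert transform along $(t,t^k)$. The theorems of Fabes--Rivi\`ere, Stein--Wainger and Nagel--Rivi\`ere--Wainger rely on the oddness of $1/t$ over a symmetric interval and do not cover such an object, and ``additional cancellation'' is not an argument. Your fallback---invoking the multiplier bounds of Theorem~\ref{thm} with $\psi$ replaced by $t$---can at best recover an $L^2$ estimate, since multiplier bounds never yield $L^q$ boundedness for $q\ne 2$. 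To run the interpolation you would have to \emph{prove} that $T_{1+ib}$ is bounded on $L^q$ for every $q>1$ with admissible growth in $|b|$; that is at least as much work as the paper's direct argument (it is essentially the two-variable analogue along $(t,t^k)$ of the main theorem of \cite{Folch99}, and is not contained in any of the references you cite). You correctly identify this as the main obstacle, but the proposal does not overcome it. A smaller issue on the left edge: assumption (a.1) does not imply that $t\mapsto\log(\psi(t)/t)$ has monotone derivative, so your claimed bounded-variation control of the extra phase $b\log(\psi(t)/t)$ on each dyadic block also needs justification.
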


\begin{remark}
We take a moment to compare our result in Corollary \ref{corp} with
Chandarana's result \cite{Chandarana} for the operator \eqref{Tab}. If we
write $t^{\alpha+1}$ as $t^\theta \psi(t)^{1-\theta}$, with
\begin{equation*}
\psi(t) = t^{\frac{\alpha}{1-\theta} + 1},
\end{equation*}
then the condition $\beta \ge 3\alpha/(1-\theta)$ implies 
\begin{equation*}
\theta < \frac{\beta - 3\alpha}{\beta},
\end{equation*}
so we see that the operator \ref{Tab} is bounded on $L^p(\R^2)$ if
$\beta > 3\alpha$ and
\begin{equation*}
\frac{1}{p} < \frac{1}{2} + \frac{\beta - 3\alpha}{\beta}.
\end{equation*}
\end{remark}

\begin{proof}[Proof of Corollary \ref{corp}.]
We follow the idea in \cite{CDZ17} and formally write 
$Tf = \sum_{j=0}^\infty T_jf$, where 
\begin{equation*}
T_jf(x,y) = \int_{\gamma''^{-1}(2^{j+1})}^{\gamma''^{-1}(2^j)} f(x-t, y-t^k) 
\frac{e^{2\pi i\gamma(t)}}{t^\theta \, \psi(t)^{1 - \theta}} dt,
\end{equation*}
and we estimate the $L^1$ and $L^2$ norms of each $T_j$, $j\ge 0$. 

For the $L^1$ norms, we start with the trivial estimate
\begin{equation*}
\begin{split}
\int |T_jf(x,y)|dxdy &\le \int_{\gamma''^{-1}(2^{j+1})}^{\gamma''^{-1}(2^j)}
\frac{1}{t^\theta \, \psi(t)^{1 - \theta}} dt ||f||_{L^1}\\
&\lesssim \frac{\gamma''^{-1}(2^j)}{\gamma''^{-1}(2^{j+1})^\theta
\psi(\gamma''^{-1}(2^{j+1}))^{1 - \theta}} ||f||_{L^1}.
\end{split}
\end{equation*}
Using (a.4) and \eqref{gamma3comp} we have
\begin{equation*}
\frac{1}{\psi(\gamma''^{-1}(2^{j+1}))^{1-\theta}} \lesssim 
|\gamma'''(\gamma''^{-1}(2^{j+1}))|^{(1-\theta)/3} \lesssim
|\gamma'''(\gamma''^{-1}(2^j))|^{(1-\theta)/3}
\end{equation*}
and, by \eqref{gamma2comp}, we have 
\begin{equation*}
\gamma''^{-1}(2^{j+1}) \gtrsim \gamma''^{-1}(2^j),
\end{equation*}
so we have the estimate
\begin{equation}\label{L1est}
||T_j||_{L^1\to L^1} \lesssim 
\gamma''^{-1}(2^j)^{1-\theta}|\gamma'''(\gamma''^{-1}(2^j))|^{(1-\theta)/3}.
\end{equation}

For the $L^2$ norm, we estimate the supremum of its multiplier
\begin{equation*}
m_j(\xi,\eta) = \int_{\gamma''^{-1}(2^{j+1})}^{\gamma''^{-1}(2^j)}
\frac{G'(t)}{t^\theta \psi(t)^{1-\theta}} dt,
\end{equation*}
with the same notation as above. Integrating by parts, 
\begin{multline}\label{MultInParts}
m_j(\xi,\eta) = \frac{G(\gamma''^{-1}(2^j))}{\gamma''^{-1}(2^j)^\theta \psi(\gamma''^{-1}(2^j))^{1-\theta}} -
\frac{G(\gamma''^{-1}(2^{j+1})}{\gamma''^{-1}(2^{j+1})^\theta \psi(\gamma''^{-1}(2^{j+1})^{1-\theta}} \\
+ \int_{\gamma''^{-1}(2^{j+1})}^{\gamma''^{-1}(2^j)} G(t) \Big(
\frac{\theta}{t^{\theta+1} \psi(t)^{1-\theta}} + 
\frac{(1-\theta)\psi'(t)}{t^\theta \psi(t)^{2-\theta}} \Big) dt.
\end{multline}
Again, by Van der Corput's lemma, 
\[
|G(t)| \lesssim \frac{1}{|\gamma'''(t)|^{1/3}}.
\]
Using (a.4) again, each of the first two terms in \eqref{MultInParts} is
estimated by
\begin{equation*}
\begin{split}
\Big| \frac{G(\gamma''^{-1}(2^j))}{\gamma''^{-1}(2^j)^\theta
\psi(\gamma''^{-1}(2^j))^{1-\theta}} \Big| &\lesssim
\frac{|\gamma'''(\gamma''^{-1}(2^j))|^{(1-\theta)/3}}{\gamma''^{-1}(2^j)^\theta
|\gamma'''(\gamma''^{-1}(2^j))|^{1/3}} \\ &\lesssim
\frac{1}{\gamma''^{-1}(2^j)^\theta
|\gamma'''(\gamma''^{-1}(2^j))|^{\theta/3}}
\end{split}\end{equation*}
and, also using \eqref{gamma2comp} and \eqref{gamma3comp}, 
\begin{equation*}
\begin{split}
\Big| \frac{G(\gamma''^{-1}(2^{j+1}))}{\gamma''^{-1}(2^{j+1})^\theta
\psi(\gamma''^{-1}(2^{j+1}))^{1-\theta}} \Big| &\lesssim
\frac{|\gamma'''(\gamma''^{-1}(2^{j+1}))|^{(1-\theta)/3}}{
\gamma''^{-1}(2^{j+1})^\theta
|\gamma'''(\gamma''^{-1}(2^{j+1}))|^{1/3}} \\ &\lesssim
\frac{1}{
\gamma''^{-1}(2^j)^\theta|\gamma'''(\gamma''^{-1}(2^j))|^{\theta/3}}.
\end{split}
\end{equation*}

Similarly, we estimate the integral by
\begin{equation*}
\begin{split}
\Big|\int_{\gamma''^{-1}(2^{j+1})}^{\gamma''^{-1}(2^j)} 
\frac{G(t)\theta}{t^{\theta+1} \psi(t)^{1-\theta}} dt \Big| &
\le
\int_{\gamma''^{-1}(2^{j+1})}^{\gamma''^{-1}(2^j)}
\frac{|G(t)|}{t^{\theta+1} \psi(t)^{1-\theta}} dt \\
&\lesssim \int_{\gamma''^{-1}(2^{j+1})}^{\gamma''^{-1}(2^j)}
\frac{|\gamma'''(t)|^{(1-\theta)/3}}{t^{\theta+1} |\gamma'''(t)|^{1/3}} dt\\
&\lesssim \frac{1}{|\gamma'''(\gamma''^{-1}(2^{j+1}))|^{\theta/3}}
\int_{\gamma''^{-1}(2^{j+1})}^{\gamma''^{-1}(2^j)}
\frac{1}{t^{\theta+1}} dt \\
&\lesssim
\frac{1}{
\gamma''^{-1}(2^j)^\theta|\gamma'''(\gamma''^{-1}(2^j))|^{\theta/3}},
\end{split}
\end{equation*}
where have again used \eqref{gamma3comp}, and, using the fact that $\psi$ is
monotone (and thus increasing),
\begin{equation*}
\begin{split}
\Big|\int_{\gamma''^{-1}(2^{j+1})}^{\gamma''^{-1}(2^j)} 
&\frac{G(t)(1-\theta)\psi'(t)}{t^\theta \psi(t)^{2-\theta}} dt \Big|
\le
\int_{\gamma''^{-1}(2^{j+1})}^{\gamma''^{-1}(2^j)}
\frac{|G(t)|}{t^\theta}
\frac{\psi'(t)}{\psi(t)^{2-\theta}} dt \\
&\lesssim \frac{1}{
\gamma''^{-1}(2^{j+1})^\theta|\gamma'''(\gamma''^{-1}(2^{j+1}))|^{1/3}}
\int_{\gamma''^{-1}(2^{j+1})}^{\gamma''^{-1}(2^j)} 
\frac{\psi'(t)}{\psi(t)^{2-\theta}} dt \\
&\lesssim \frac{1}{
\gamma''^{-1}(2^{j+1})^\theta|\gamma'''(\gamma''^{-1}(2^{j+1}))|^{1/3}}
\frac{1}{\psi(\gamma''^{-1}(2^{j+1}))^{1-\theta}}\\
&\lesssim \frac{1}{
\gamma''^{-1}(2^{j+1})^\theta|\gamma'''(\gamma''^{-1}(2^{j+1}))|^{\theta/3}}\\
&\lesssim
\frac{1}{
\gamma''^{-1}(2^j)^\theta|\gamma'''(\gamma''^{-1}(2^j))|^{\theta/3}}.
\end{split}
\end{equation*}
We therefore conclude that
\begin{equation}\label{L2est}
||T_j||_{L^2\to L^2} \lesssim \frac{1}{
\gamma''^{-1}(2^j)^\theta|\gamma'''(\gamma''^{-1}(2^j))|^{\theta/3}}.
\end{equation}

By interpolation, if $0\le\tau\le1$ and
\begin{equation*}
\frac{1}{p} = \tau + \frac{1-\tau}{2},
\end{equation*}
then $T_j$ is bounded on $L^p(\R^2)$ with norm
\begin{equation}\label{Tjnorm}
\begin{split}
||T_j||_{L^p\to L^p} &\le ||T_j||_{L^1\to L^1}^\tau
||T_j||_{L^2\to L^2}^{1-\tau}\\ & \lesssim
\frac{1}{\gamma''^{-1}(2^j)^{\theta-\tau}
|\gamma'''(\gamma''^{-1}(2^j))|^{(\theta-\tau)/3}}.
\end{split}
\end{equation}
By Remark \ref{seriescon}, the series
\begin{equation*}
\sum_{j=0}^\infty ||T_j||_{L^p\to L^p} \lesssim
\sum_{j=0}^\infty \frac{1}{\gamma''^{-1}(2^j)^{\theta-\tau}
|\gamma'''(\gamma''^{-1}(2^j))|^{(\theta-\tau)/3}}
\end{equation*}
converges as long as $\theta > \tau$, that is
\begin{equation*}
\frac{1}{p} = \frac{1+\tau}{2} < \frac{1+\theta}{2}.
\end{equation*}
\end{proof}

\section{Discussion}

In \cite{Folch99}, the author proved that the operator
\begin{equation}\label{line}
f \mapsto \lim_{\e\to 0} \int_\e^1 f(x - t)
\frac{e^{2\pi i\gamma(t)}}{t^\theta \, \psi(t)^{1 - \theta}} dt
\end{equation}
is bounded on $L^p(\R)$ as long as $\gamma, \psi$ satisfy assumptions that are
analogous to our assumptions (a.1)-(a.5) (as discussed in this article) and $p$
satisfies 
\begin{equation*}
\frac{1}{2} \le \frac{1}{p} \le \frac{1+\theta}{2},
\end{equation*}
for $0\le \theta < 1$. Thus, a natural question is whether Corollary \ref{corp}
is true for the endpoint case $p=(1+\theta)/2$.


\end{document}